 \newtheorem{lemma}{Lemma}[section]
 \newtheorem{theorem}{Theorem}[section]
\newtheorem{corollary}{Corollary}[section]
\journal{Discrete Appl. Math.}
\begin{document}
\begin{frontmatter}

\title{Equitable coloring of Kronecker products of complete multipartite graphs and complete graphs}
\author{Zhidan Yan }
\author{Wei Wang\corref{cor1}}
\cortext[cor1]{Corresponding author.
Fax:+86-997-4682766.\\
\ead{wangwei.math@gmail.com}}

\address{College of Information Engineering, Tarim University, Alar 843300, China}

\begin{abstract}
A proper vertex coloring of a graph is equitable if the sizes of
color classes differ by at most 1. The equitable chromatic number of
a graph $G$, denoted by $\chi_=(G)$, is the minimum $k$ such that
$G$ is equitably $k$-colorable. The equitable chromatic threshold of
a graph $G$, denoted by $\chi_=^*(G)$, is the minimum $t$ such that
$G$ is equitably $k$-colorable for $k \ge t$. In this paper, we give
the exact values of $\chi_=(K_{m_1, \cdots, m_r} \times K_n)$
and $\chi_=^*(K_{m_1,\cdots, m_r} \times K_n)$ for $\sum_{i =
1}^r m_i \leq n$.
\end{abstract}

\begin{keyword}
Equitable coloring \sep Equitable chromatic threshold \sep Complete multipartite graphs\sep Kronecker product
\MSC 05C15
\end{keyword}

\end{frontmatter}

\section{Introduction}
\label{intro} All graphs considered in this paper are finite, undirected and without loops or multiple edges.
For a positive integer $k$, let $[k] = \{1,2,\cdots,k\}$. A (proper) $k$-coloring of a graph $G$ is a mapping $f
: V(G) \rightarrow [k]$ such that $f(x) \neq f(y)$ whenever $xy \in E(G)$. We call the set $f^{-1}(i)= \{x \in
V(G)\colon\,f(x) = i\}$ a color class for each $i \in [k]$. A graph is $k$-colorable if it has a $k$-coloring.
The chromatic number of $G$, denoted by $\chi(G)$, is equal to min\{$k\colon\,G$ is $k$-colorable\}. An
equitable $k$-coloring of $G$ is a $k$-coloring for which any two color classes differ in size by at most 1, or
equivalently, each color class is of size $\lfloor|V(G)|/k\rfloor$ or $\lceil|V(G)|/k\rceil$. The equitable
chromatic number of $G$, denoted by $\chi_ = (G)$, is equal to min \{$k\colon\,G$ is equitably $k$-colorable \},
and the equitable chromatic threshold of a graph $G$, denoted by $\chi_=^*(G)$, is equal to min \{$t\colon\, G$
is equitably $k$-colorable for $k\geq t$\}. The Kronecker (or cross, direct, tensor, weak tensor or categorical)
product of graphs $G$ and $H$ is the graph $G \times H$ with vertex set $V(G) \times V(H)$ and edge set $\{(x,
y)(x^\prime, y^\prime)\colon\,xx^\prime \in E(G), yy^\prime \in E(H)\}$.

The concept of equitable colorability was first introduced by Meyer \cite{meyer1973}. The definitive survey of
the subject was given by Lih \cite{lih1998}. For its many application such as scheduling and constructing
timetables, please see \cite{baker1996,janson2002,kitagawa1988,pelsmajer2004,smith1996,tucker1973}.

In 1964, Erd\H{o}s \cite{erdos1964} conjectured that any graph $G$ with maximum degree $\Delta(G)\le k$ has an
equitable $(k + 1)$-coloring, or equivalently, $\chi_=^*(G)\leq \Delta(G) + 1$. This conjecture was proved  in
1970 by Hajnal and Szemer\'{e}di \cite{hajnal1970}. Recently, Kierstead and Kostochka \cite{kierstead2008} gave
a short proof of the theorem, and presented a polynomial algorithm for such a coloring.  Brooks' type results
are conjectured: Equitable Coloring   Conjecture \cite{meyer1973} $\chi_=(G) \leq \Delta(G)$, and Equitable
$\Delta$-Coloring Conjecture \cite{chen1994b} $\chi_=^*(G)\leq\Delta(G)$ for $G\notin\{K_n, C_{2n+1},
K_{2n+1,2n+1}\}$. Equitable coloring has been extensively studied, please see
\cite{chen1994a,chen1994b,chen2009,duffus1985,kostochka2002}. Exact values of equitable chromatic numbers of
trees \cite{chen1994a} and complete multipartite graphs \cite{blum2003,lam2001,lin2010} were determined. Among
the known results, we are most interested in those on graph products, see
\cite{chen2009,furmanzyk2006,lin2010,lin2012,zhu1998}. Duffus, Sands and Woodrow \cite{duffus1985} showed that
$\chi(K_m \times K_n)=\min\{\chi (K_m),\chi(K_n)\}= \min\{ m, n\}$, and Chen, Lih, and Yan \cite{chen2009} got
that $\chi_=(K_m \times K_n) = \min\{m,n\}$. Recently, among other results, Lin and Chang~\cite{lin2010}
established an upper bound on $\chi_=^*(K_m \times K_n)$(see Lemma~\ref{upperbound} below). They also determined
exact values of $\chi_=^*(G \times K_n)$ when $G$ is $P_2$, $P_3$, $C_3$ or $C_4$. The exact value of
$\chi_=^*(K_m \times K_n)$ was obtained by Yan and Wang \cite{yan2012b}. The aim of the present paper is to
determine $\chi_=(K_{m_1, \cdots, m_r} \times K_n)$ and $\chi_=^*(K_{m_1, \cdots, m_r} \times K_n)$ for $\sum_{i
= 1}^r m_i \leq n$.

\section{Preliminaries}

Before stating our main result, we need several preliminary results on integer partitions. Recall that a
partition of an integer $n$ is a sum of the form $n = t_1 + t_2 + \cdots + t_k$, where $0\leq t_i \leq n$ for
$i\in [k]$. We call such a partition a $q$-partition if each $t_i$ is in the set $\{q, q+1\}$. A $q$-partition
of $n$ is typically denoted as $n = aq + b(q + 1)$, where $n$ is the sum of $a$ $q$'s and $b$ $(q + 1)$'s. A
$q$-partition of $n$ is called a minimal(respectively, maximal) $q$-partition if the number of its addends, $a +
b$, is as small(respectively, large) as possible. For example, $8=2 + 2 + 2 + 2$ is a maximal $2$-partition of
$8$, and $8=2 + 3 + 3$ is a minimal $2$-partition of $8$.

Our first lemma is from \cite{blum2003}, which characterizes the condition when a $q$-partition of $n$ exists.
For the sake of completeness, here we restate their proof. In what follows, all variables are nonnegative
integers.

 \begin{lemma}\label{basic}\cite{blum2003}
 If $0 < q \leq n$, and $n=kq+r$ with $0 \leq r < q$, then there is
 a $q$-partition of $n$ if and only if $r\leq k $.
 \end{lemma}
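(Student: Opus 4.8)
The plan is to parametrize any $q$-partition by its number of addends. Writing a $q$-partition as $n = aq + b(q+1)$ with $a, b \ge 0$ and setting $s = a + b$ for the total number of parts, we have $n = sq + b$ where $0 \le b \le s$, the upper bound $b \le s$ being exactly the requirement $a \ge 0$. Thus a $q$-partition of $n$ exists if and only if there is an integer $s \ge 1$ with $sq \le n \le s(q+1)$, equivalently $s$ lying in the interval $[\,n/(q+1),\, n/q\,]$. Since $0 < q \le n$ forces $n > 0$, at least one part is always needed, so $s \ge 1$ is automatic.

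For sufficiency, suppose $r \le k$. I would simply exhibit the partition by taking $a = k - r$ and $b = r$: both are nonnegative (using $r \le k$ and $r \ge 0$), and $aq + b(q+1) = (k-r)q + r(q+1) = kq + r = n$, so this is a valid $q$-partition.

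For necessity, suppose a $q$-partition $n = aq + b(q+1)$ exists, and let $s = a + b$. From $b \ge 0$ we get $sq \le n$, hence $s \le n/q$; since $s$ is an integer and $n = kq + r$ with $0 \le r < q$ gives $\lfloor n/q \rfloor = k$, we conclude $s \le k$. On the other hand, $b \le s$ yields $n = sq + b \le s(q+1) \le k(q+1) = kq + k$. Comparing with $n = kq + r$ forces $r \le k$, as desired.

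The argument is short; the only point requiring care is the translation between the two descriptions of a $q$-partition, namely checking that $0 \le b \le s$ corresponds exactly to $a, b \ge 0$, and that $\lfloor n/q\rfloor = k$ under the stated division with remainder. No deeper obstacle arises, since once the number of parts $s$ is isolated, the condition $r \le k$ emerges directly from the chain of inequalities $n \le s(q+1) \le k(q+1)$.
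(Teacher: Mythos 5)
Your proof is correct and takes essentially the same approach as the paper's: the identical explicit construction $n=(k-r)q+r(q+1)$ for sufficiency, and for necessity the same rewriting $n=(a+b)q+b$ with the bound $a+b\le k$. The only cosmetic difference is the final step, where the paper concludes via the chain $r\le b\le a+b\le k$ while you compare $n\le s(q+1)\le k(q+1)$; these are the same inequalities rearranged.
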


 \begin{proof}
 If $r\leq k $, then $n = (k - r)q + r(q + 1)$ is a $q$-partition of $n$.
 Conversely, given a $q$-partition $n = aq + b(q + 1)$ of $n$, we have
 $n = (a + b)q + b$, so $(a + b) \leq k$ and $r \leq b$. Consequently, $r \leq b \leq (a + b) \leq
  k$.
 \end{proof}

\begin{corollary}\label{noq-partition}\cite{yan2012b}
There is no $q$-partition of $n$ if and only if $\lceil n/(q + 1) \rceil> n/q $.
\end{corollary}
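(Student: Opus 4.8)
The plan is to read this as a direct corollary of Lemma~\ref{basic} and simply translate its arithmetic criterion into the stated ceiling inequality. Write $n = kq + r$ with $0 \le r < q$, so that $n/q = k + r/q$ with $0 \le r/q < 1$. By Lemma~\ref{basic}, a $q$-partition of $n$ exists if and only if $r \le k$; hence \emph{no} $q$-partition exists if and only if $r > k$, i.e. $r \ge k+1$. The whole task is therefore to show that the condition $r > k$ is equivalent to $\lceil n/(q+1)\rceil > n/q$.

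First I would handle the right-hand side. Since $\lceil n/(q+1)\rceil$ is an integer and $n/q = k + r/q$ lies in the half-open interval $[k, k+1)$, the strict inequality $\lceil n/(q+1)\rceil > n/q$ holds if and only if $\lceil n/(q+1)\rceil \ge k+1$. Next I would evaluate the ceiling itself by rewriting $n = k(q+1) + (r-k)$, which gives $\lceil n/(q+1)\rceil = k + \lceil (r-k)/(q+1)\rceil$ because $k$ is an integer. Now $\lceil (r-k)/(q+1)\rceil \ge 1$ exactly when $(r-k)/(q+1) > 0$, that is, when $r > k$. Chaining these equivalences yields $\lceil n/(q+1)\rceil > n/q \iff \lceil n/(q+1)\rceil \ge k+1 \iff r > k$, which by the previous paragraph is precisely the statement that no $q$-partition of $n$ exists.

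A second, more conceptual route (which I would keep in reserve as a sanity check) notes that a $q$-partition using $s$ addends has total sum between $sq$ and $s(q+1)$; thus a $q$-partition of $n$ exists if and only if some integer $s$ satisfies $n/(q+1) \le s \le n/q$, i.e. if and only if the interval $[\,n/(q+1),\, n/q\,]$ contains an integer. Since the least integer at or above the left endpoint is $\lceil n/(q+1)\rceil$, such an $s$ exists exactly when $\lceil n/(q+1)\rceil \le n/q$, and negating gives the claim directly. Either way, there is no genuine obstacle here: the only thing requiring care is the boundary bookkeeping with the ceiling and floor, in particular keeping track of the edge case $r = 0$ and making sure the strict inequality against the non-integer bound $n/q$ is correctly converted into the integer bound $\ge k+1$.
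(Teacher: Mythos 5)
Your proof is correct and takes the intended route: the paper states this result as an immediate corollary of Lemma~\ref{basic} (deferring details to \cite{yan2012b}), and your first argument is exactly that derivation, translating the criterion ``no $q$-partition iff $r>k$'' into the ceiling inequality via $n=k(q+1)+(r-k)$. The boundary bookkeeping (comparing the integer $\lceil n/(q+1)\rceil$ against $n/q\in[k,k+1)$, and the shift $\lceil k+x\rceil=k+\lceil x\rceil$) is handled correctly, so nothing is missing.
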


\begin{lemma}\label{minimal1}\cite{blum2003}
A $q$-partition $n = aq + b(q + 1)$ of $n$ is minimal if and only if $a < q + 1$. Moreover a minimal
 $q$-partition is unique.
 \end{lemma}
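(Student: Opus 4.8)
The plan is to exploit the single exchange move that preserves the sum while changing the number of addends. Observe that $q+1$ copies of the addend $q$ and $q$ copies of the addend $q+1$ both sum to $q(q+1)$. Hence, starting from a $q$-partition $n = aq + b(q+1)$ with $a \geq q+1$, replacing $q+1$ of the $q$'s by $q$ copies of $q+1$ yields the $q$-partition $n = (a-(q+1))q + (b+q)(q+1)$, whose number of addends is $(a+b)-1$. This shows that any partition with $a \geq q+1$ is not minimal, proving the ``only if'' direction: a minimal $q$-partition must satisfy $a < q+1$.

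For the converse and the uniqueness, I would pass to the number of addends $s := a+b$ as the controlling parameter. Writing $n = aq + b(q+1) = sq + b$ gives $b = n - sq$ and $a = s - b = s(q+1) - n$; thus $s$ determines $a$ and $b$ completely, and as $s$ increases by $1$ the value of $a$ increases by exactly $q+1$. Consequently, if $n = a'q + b'(q+1)$ is any $q$-partition with strictly fewer addends, i.e.\ $s' < s$, then $a' = a - (s-s')(q+1) \leq a - (q+1)$; when $a \leq q$ this forces $a' < 0$, a contradiction. Hence every $q$-partition with $a < q+1$ is minimal, establishing the ``if'' direction.

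The same parametrization yields uniqueness. Suppose two $q$-partitions both have $a$-values in $\{0,1,\ldots,q\}$ and distinct numbers of addends $s_1 < s_2$. Their $a$-values differ by $(s_2 - s_1)(q+1) \geq q+1$, which is impossible for two numbers lying in an interval of width $q$. Therefore at most one $q$-partition satisfies $a < q+1$, and since this condition characterizes minimality, the minimal $q$-partition is unique.

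The only point requiring care---the main obstacle---is verifying that the exchange move stays within the set of legal $q$-partitions and genuinely lowers the addend count in the boundary cases; equivalently, that the step size $q+1$ in $a$ is strictly larger than the width $q$ of the interval $\{0,\ldots,q\}$. This single inequality $q+1 > q$ is what makes both the minimality characterization and the uniqueness tight, so I would state it explicitly rather than bury it inside the computations.
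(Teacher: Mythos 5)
Your proof is correct. Note that the paper itself does not prove this lemma: it is quoted from Blum--Torrey--Hammack \cite{blum2003} without proof (only Lemma~\ref{basic} has its proof restated), so there is no in-paper argument to compare against. Your two ingredients---the exchange move replacing $q+1$ copies of $q$ by $q$ copies of $q+1$, and the parametrization $a = s(q+1)-n$, $b = n-sq$ by the number of addends $s=a+b$, with the observation that $a$ jumps by $q+1$ while the admissible range $\{0,\dots,q\}$ has width only $q$---are exactly what is needed, and together they give the equivalence and the uniqueness with no gaps (in particular, the parametrization also rules out two distinct partitions with the same number of addends).
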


\begin{lemma}\label{maximal1}\cite{yan2012a}
 A $q$-partition $n = aq + b(q + 1)$ of $n$ is maximal if and only if $b < q$. Moreover a maximal
 $q$-partition is unique.
 \end{lemma}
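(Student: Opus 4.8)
The plan is to mirror the treatment of the minimal case in Lemma~\ref{minimal1}, exploiting the fact that in any $q$-partition the number of addends is controlled by a single quantity. The starting observation is the identity $n = aq + b(q+1) = (a+b)q + b$, valid for every $q$-partition $n = aq + b(q+1)$. Reading this modulo $q$ shows that $b \equiv n \pmod q$, so writing $n = kq + r$ with $0 \le r < q$ forces $b \equiv r \pmod q$ for \emph{every} $q$-partition of $n$. Solving the same identity for the number of addends gives $a + b = (n - b)/q$, a strictly decreasing function of $b$. Hence a $q$-partition is maximal (its addend count $a+b$ is as large as possible) if and only if its value of $b$ is as small as possible among all $q$-partitions of $n$.

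It then remains to identify the smallest feasible $b$. Since $b \ge 0$ and $b \equiv r \pmod q$, the least candidate is $b = r$, and this is exactly the condition $0 \le b < q$ (any $b$ with $0 \le b < q$ and $b \equiv r \pmod q$ must equal $r$). To confirm that $b = r$ is attainable I would exhibit the corresponding partition: taking $b = r$ forces $a = (n-b)/q - b = k - r$, and $a = k - r \ge 0$ holds precisely because a $q$-partition exists, i.e.\ $r \le k$ by Lemma~\ref{basic}. Thus $n = (k-r)q + r(q+1)$ is a genuine $q$-partition with the minimal possible $b$, so it is maximal, and it satisfies $b = r < q$. Conversely, any maximal partition must attain this minimal $b = r < q$.

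Uniqueness comes for free from this analysis: a maximal partition is forced to have $b = r$, whence $a = k - r$ is determined, so there is exactly one maximal $q$-partition.

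The individual steps are routine; the only point requiring care is the feasibility check that the minimizing value $b = r$ actually yields $a \ge 0$, which is where the hypothesis that a $q$-partition exists (equivalently $r \le k$) enters. An alternative, more combinatorial route would replace the monotonicity computation by an exchange argument: whenever $b \ge q$ one can swap $q$ parts equal to $q+1$ for $q+1$ parts equal to $q$, preserving the sum while raising the number of addends by one, which shows directly that $b \ge q$ precludes maximality; establishing the converse and uniqueness would then still rely on the congruence $b \equiv r \pmod q$.
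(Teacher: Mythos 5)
Your proof is correct. Note that the paper itself states Lemma~\ref{maximal1} without proof, citing \cite{yan2012a}, so there is no in-paper argument to compare against; what can be compared is the toolkit, and yours matches it. The identity $n = aq + b(q+1) = (a+b)q + b$ is exactly the one driving the paper's proof of Lemma~\ref{basic}, and your reduction works cleanly: the addend count $a+b = (n-b)/q$ is maximized precisely when $b$ is minimized; every $q$-partition has $b \equiv r \pmod q$ where $n = kq + r$, $0 \le r < q$; and the value $b = r$ is attainable because $a = k - r \ge 0$, which follows from the existence of the given $q$-partition via Lemma~\ref{basic}. This yields both directions of the equivalence ($b < q$ together with the congruence forces $b = r$) and uniqueness ($b = r$ determines $a = k - r$) in one stroke. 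Your closing remark is also accurate: the exchange argument (trading $q$ addends of size $q+1$ for $q+1$ addends of size $q$) proves only that $b \ge q$ precludes maximality, and the converse plus uniqueness still require the congruence analysis, so the algebraic route you chose as the main argument is the complete one.
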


\begin{lemma}\label{minandmax1}\cite{yan2012a}
If $n= aq + b(q + 1)$ is a minimal $q$-partition, then $a + b = \lceil n/(q + 1)\rceil$. If $n = a^\prime q +
b^\prime(q + 1)$ is a maximal $q$-partition, then $a^\prime + b^\prime = \lfloor n/q \rfloor$. Moreover, when
$\lceil n/(q + 1)\rceil = \lfloor n/q \rfloor$, there is only one $q$-partition of $n$.
\end{lemma}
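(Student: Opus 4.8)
The plan is to reduce the whole statement to a count of a single parameter, the number of addends $s := a + b$, and to show that a $q$-partition of $n$ is determined by $s$ alone. The starting point is the identity obtained by grouping the addends: since exactly $b$ of them equal $q+1$ and the remaining $a$ equal $q$,
\[
 n = aq + b(q+1) = (a+b)q + b = sq + b.
\]
Reading this off, $b = n - sq$ is forced by $s$, and then $a = s - b = (q+1)s - n$. Hence the pair $(a,b)$, and so the entire partition, is recovered uniquely from $s$.

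The next step is to convert the nonnegativity constraints into bounds on $s$. From $b = n - sq \ge 0$ one gets $s \le n/q$, whence $s \le \lfloor n/q \rfloor$ as $s$ is an integer; from $a = (q+1)s - n \ge 0$ one gets $s \ge n/(q+1)$, whence $s \ge \lceil n/(q+1)\rceil$. For the converse, I would check that every integer $s$ in the range $\lceil n/(q+1)\rceil \le s \le \lfloor n/q\rfloor$ makes both $a$ and $b$ nonnegative, so that each such $s$ yields exactly one genuine $q$-partition. This gives a bijection between the $q$-partitions of $n$ and the integers of the interval $[\,\lceil n/(q+1)\rceil,\ \lfloor n/q\rfloor\,]$.

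With the bijection in hand the three assertions are immediate. A minimal $q$-partition minimizes $s = a+b$, so its number of addends equals the left endpoint $\lceil n/(q+1)\rceil$; a maximal one maximizes $s$, so it equals the right endpoint $\lfloor n/q\rfloor$. If the two endpoints coincide, the interval contains a single integer and the bijection forces a unique $q$-partition. The only delicate point is the converse half of the bijection: verifying that each $s$ in the range really produces nonnegative $a,b$ is precisely what makes the extremal values attained at the endpoints rather than merely bounded by them; this is also where the argument meshes with Lemma~\ref{basic}, since the interval is nonempty exactly when a $q$-partition exists.
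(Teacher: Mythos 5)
The paper itself offers no proof of this lemma: it is quoted from \cite{yan2012a}, and only Lemma~\ref{basic} has its proof restated ``for the sake of completeness,'' so there is no in-paper argument to compare yours against. Your proof is correct and self-contained. The identity $n = aq + b(q+1) = (a+b)q + b$ is the same device the paper uses to prove Lemma~\ref{basic}, and your upgrade of it to a bijection $s \mapsto (a,b) = \bigl((q+1)s - n,\ n - sq\bigr)$ between $q$-partitions of $n$ and the integers $s$ of $\bigl[\lceil n/(q+1)\rceil, \lfloor n/q\rfloor\bigr]$ is sound; the verification you defer amounts to two one-line implications, $s \le \lfloor n/q\rfloor \Rightarrow n - sq \ge 0$ and $s \ge \lceil n/(q+1)\rceil \Rightarrow (q+1)s - n \ge 0$, together with the algebraic check $\bigl((q+1)s - n\bigr)q + (n - sq)(q+1) = n$, all immediate. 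Given the bijection, minimality and maximality are attained exactly at the endpoints, and coincidence of the endpoints forces uniqueness, which is the whole lemma. It is worth noting what your route buys beyond the statement at hand: the same bijection instantly yields Corollary~\ref{noq-partition} (no $q$-partition iff the interval is empty, i.e., $\lceil n/(q+1)\rceil > \lfloor n/q\rfloor$, equivalently $\lceil n/(q+1)\rceil > n/q$ since the left side is an integer), Lemma~\ref{minimal1} ($s$ is minimal iff $s - 1 < n/(q+1)$, i.e., $a = (q+1)s - n < q+1$), Lemma~\ref{maximal1} ($s$ is maximal iff $s + 1 > n/q$, i.e., $b = n - sq < q$), and Lemma~\ref{minandmax2} (compare $\lfloor n/q\rfloor$ with $\lceil n/q\rceil$), so your single argument unifies the entire block of partition lemmas that the paper imports from \cite{blum2003} and \cite{yan2012a}.
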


\begin{lemma}\label{minandmax2}\cite{yan2012a}
Let $n = aq + b(q + 1)$ be the maximal $q$-partition, and $n = a^\prime (q-1) + b^\prime q$ be the minimal $(q -
1)$-partition. If $q|n$ then $a + b = a^\prime + b^\prime$, otherwise, $a + b + 1 = a^\prime + b^\prime$.
\end{lemma}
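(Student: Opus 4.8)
The plan is to reduce both sums $a+b$ and $a'+b'$ to the standard floor/ceiling quantities furnished by Lemma~\ref{minandmax1}, after which the two cases follow from an elementary identity relating $\lfloor n/q\rfloor$ and $\lceil n/q\rceil$.

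First I would apply Lemma~\ref{minandmax1} directly to the maximal $q$-partition $n = aq + b(q+1)$ to conclude that $a + b = \lfloor n/q\rfloor$. Next, observe that a minimal $(q-1)$-partition is simply a minimal partition in the sense of Lemma~\ref{minandmax1} with the role of $q$ played by $q-1$; its addends lie in $\{q-1, q\}$. Applying the minimal clause of Lemma~\ref{minandmax1} with $q$ replaced by $q-1$ then gives $a' + b' = \lceil n/((q-1)+1)\rceil = \lceil n/q\rceil$.

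It remains only to compare $\lfloor n/q\rfloor$ with $\lceil n/q\rceil$. If $q \mid n$ then $n/q$ is an integer, so $\lfloor n/q\rfloor = \lceil n/q\rceil$ and hence $a+b = a'+b'$. If $q \nmid n$, write $n = cq + s$ with $0 < s < q$; then $\lfloor n/q\rfloor = c$ while $\lceil n/q\rceil = c+1$, giving $a+b+1 = a'+b'$, which is exactly the asserted relation in the two cases.

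I do not anticipate a serious obstacle here: the combinatorial content is packaged entirely in Lemma~\ref{minandmax1}, and the whole argument is a one-line substitution followed by a floor/ceiling triviality. The only point demanding a moment's care is the bookkeeping when re-indexing that lemma from parameter $q$ to $q-1$; in particular one must check that the quantity emerging from the minimal clause is $\lceil n/q\rceil$ rather than $\lceil n/(q-1)\rceil$, so that both $a+b$ and $a'+b'$ are expressed in terms of the \emph{same} modulus $q$ and the comparison of floor and ceiling is legitimate. Implicit throughout is the hypothesis that both partitions exist, which the statement grants.
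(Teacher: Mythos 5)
Your proof is correct. Note that the paper itself gives no proof of this lemma (it is quoted from reference \cite{yan2012a}), so there is nothing to compare against; your derivation --- reading off $a+b=\lfloor n/q\rfloor$ from the maximal clause of Lemma~\ref{minandmax1}, reading off $a'+b'=\lceil n/q\rceil$ from the minimal clause with $q$ replaced by $q-1$, and then comparing floor and ceiling of $n/q$ according to whether $q\mid n$ --- is a complete and valid proof, and it is the natural one given the tools stated in the paper.
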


Denote the partite sets of the graph $K_{m_1n, \cdots, m_rn}$ as $N_i$, with $|N_i|=m_in$ and $i \in [r]$. Any
given color class of an equitable coloring must lie entirely in some $N_i$, for otherwise two of its vertices
are adjacent. Thus, any equitable coloring partitions each $N_i$ into color classes $V_{i_1}, V_{i_2}, \cdots,
V_{i_{v_i}}$, no two of which differ in size by more than 1. If the sizes of the color classes are in the set
$\{q,q+1\}$, then these sizes induce $q$-partitions of each $m_in$. Conversely, given a number $q$, and
$q$-partitions $m_in = aq + b(q + 1)$ of each $m_in$, there is an equitable coloring of $K_{m_1n, \cdots, m_rn}$
with color sizes $q$ and $q+1$; just partition each $N_i$ into $ a_i$ sets of size $q$, and $ b_i$ sets of
$q+1$. It follows, then, that finding an equitable coloring of $K_{m_1n, \cdots, m_rn}$ amounts to finding a
number $q$, and simultaneous $q$-partitions of each of the numbers $m_in$.

\begin{lemma}\label{upperbound}\cite{lin2010}
$\chi_=^*(K_m \times K_n) \leq \lceil {mn}/(m+1) \rceil$ for $m \leq n$.
\end{lemma}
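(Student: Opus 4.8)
The plan is to translate the problem into a tiling question on an $m \times n$ array and solve it constructively. Identify $V(K_m \times K_n)$ with the cells of an $m \times n$ grid, where the cell in row $i$ and column $j$ is the vertex $(i,j)$; two cells are adjacent exactly when they lie in different rows and different columns. A short argument shows that every independent set is contained in a single row or a single column: if an independent set contained two cells in distinct rows, they would have to share a column, and then comparing any third cell with these two forces it into that same column as well. Consequently an equitable $k$-coloring of $K_m \times K_n$ is precisely a partition of the grid into $k$ blocks, each confined to one row or one column, with every block of size $\lfloor mn/k \rfloor$ or $\lceil mn/k \rceil$. The goal is to exhibit such a partition for every $k \ge \lceil mn/(m+1) \rceil$.

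First I would control the block sizes. Put $q = \lfloor mn/k \rfloor$. From $k \ge \lceil mn/(m+1)\rceil \ge mn/(m+1)$ we get $mn/k \le m+1$, hence $q \le m+1$; moreover $q = m+1$ can occur only when $mn/k = m+1$ exactly, i.e. $k = mn/(m+1)$ is an integer, which (since $\gcd(m,m+1)=1$) forces $(m+1)\mid n$ and $k = \lceil mn/(m+1)\rceil$. That single borderline case is disposed of directly by the all-rows coloring, cutting each row of length $n$ into $n/(m+1)$ blocks of size $m+1$. In every remaining case $q \le m$, so each block has size $q$ or $q+1 \le m+1 \le n+1$; blocks of size at most $m$ fit inside a column, and the only blocks of size $m+1$ (arising when $q=m$) are placed inside rows, which is possible because then $n \ge m+1$. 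Writing $r = mn - qk$, the target is $r$ blocks of size $q+1$ and $k-r$ blocks of size $q$.

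The engine of the construction is a transposition gadget: a $q \times (q+1)$ sub-rectangle can be tiled either as $q$ horizontal blocks of size $q+1$ (its rows) or as $q+1$ vertical blocks of size $q$ (its columns). Both tilings cover the same cells but differ in block count by exactly one, and both sub-rectangle dimensions fit since $q \le m$ and $q+1 \le n$ in the main range. I would start from a base tiling realizing the smallest admissible value $k_{\min} = \lfloor mn/(q+1)\rfloor + 1$ of the number of blocks (the configuration with the greatest number of $(q+1)$-blocks), arrange a supply of these blocks into disjoint $q \times (q+1)$ rectangles, and then transpose rectangles one at a time. Since each transposition raises the block count by exactly $1$ and $k$ ranges over all integers in the interval $(mn/(q+1),\, mn/q]$ for which $\lfloor mn/k\rfloor = q$, finitely many transpositions reach the prescribed $k$.

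I expect the main obstacle to be the bookkeeping behind the base tiling rather than the transposition idea itself: when $q \nmid m$ or $(q+1)\nmid n$, the grid does not split evenly into $q \times (q+1)$ rectangles, and the leftover cells in each row and column must be grouped into blocks of size $q$ or $q+1$ consistently. This is exactly where the partition results are needed, using Lemma~\ref{basic} and Corollary~\ref{noq-partition} to decide when a row or column admits a $q$-partition and Lemmas~\ref{minimal1}--\ref{minandmax2} to count and allocate the addends so that the global totals $r$ and $k-r$ come out right. The degenerate regimes $q = m$ (blocks that are full columns together with width-$(m+1)$ row blocks) and $q = n$ (forcing $m=n=k$ and the all-rows coloring) would be checked separately.
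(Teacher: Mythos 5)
Two preliminary remarks. First, the paper you are being compared against does not actually prove this lemma: it is quoted verbatim from Lin and Chang \cite{lin2010}, so your attempt has to be judged on its own merits. Second, much of your setup is correct: independent sets of $K_m\times K_n$ are indeed subsets of a single row or a single column of the $m\times n$ grid, an equitable $k$-coloring is exactly a tiling by blocks of sizes $q=\lfloor mn/k\rfloor$ and $q+1$ lying inside rows or columns, the bound $q\le m+1$ follows from $k\ge\lceil mn/(m+1)\rceil$, and the borderline case $q=m+1$ (forcing $(m+1)\mid n$ and $k=mn/(m+1)$) is correctly dispatched by cutting rows.

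The genuine gap is the engine itself: the ``base tiling at $k_{\min}$ plus rectangle transpositions'' scheme provably fails; it is not merely unfinished bookkeeping. Take $m=n=5$ and $q=3$, so the relevant range is $k\in\{7,8\}$ with $k_{\min}=7$, i.e.\ four $4$-blocks and three $3$-blocks. No $7$-tiling of the $5\times5$ grid contains three $4$-blocks forming a transposable rectangle. Indeed, suppose three $4$-blocks fill a product set of $3$ rows by $4$ columns, say rows $1$--$3$ and columns $1$--$4$ (the $4\times3$ orientation is symmetric). Then each of columns $1$--$4$ has only two free cells (rows $4$ and $5$), so it can host no block; hence the cells $(1,5),(2,5),(3,5)$ can only be covered by a block inside column $5$, and column $5$ can host at most one block. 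If that block is the $3$-block $\{(1,5),(2,5),(3,5)\}$, then rows $4$ and $5$ must each be partitioned by blocks lying inside them, impossible since $5$ is not a sum of $3$'s and $4$'s. If instead it is a $4$-block $\{(1,5),(2,5),(3,5),x\}$, it is the fourth and last $4$-block, and the row containing the cell opposite to $x$ is left with exactly four cells that would have to be covered by $3$-blocks, again impossible. So from $k=7$ no transposition is ever available, yet an $8$-tiling must be produced; one exists (e.g.\ give column $1$ the $4$-block $\{(1,1),(2,1),(4,1),(5,1)\}$, columns $2$--$5$ the triples $\{(1,2),(4,2),(5,2)\}$, $\{(2,3),(4,3),(5,3)\}$, $\{(3,4),(4,4),(5,4)\}$, $\{(3,5),(4,5),(5,5)\}$, and rows $1$--$3$ the leftovers), but it cannot be reached by your moves; the symmetric argument shows no $8$-tiling contains four $3$-blocks forming a rectangle, so the downward direction is blocked as well. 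In other words, for this $(m,n,q)$ the valid tilings with $7$ and with $8$ blocks are simply not connected by rectangle transpositions. A correct proof must construct a tiling for each $k$ directly --- as Lin and Chang do, by splitting the grid, in a way that depends on $k$, into a part cut vertically and a part cut horizontally --- rather than climbing from one value of $k$ to the next.
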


\begin{lemma}\label{completegraph2}\cite{lih1998}
 $\chi_=(K_{n_1, \cdots,
n_t}) = \sum_{i = 1}^t \lceil n_i/h \rceil$, where $h=\textup{max}\{k\colon\,n_i/(k - 1) \geq \lceil n_i/k
\rceil$ for all $i$\}.
\end{lemma}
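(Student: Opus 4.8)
The plan is to translate equitable colorings of $K_{n_1,\dots,n_t}$ into the language of simultaneous $q$-partitions and then prove matching bounds. Write $N=\sum_{i=1}^t n_i$ and let $W_1,\dots,W_t$ be the partite sets, with $|W_i|=n_i$. As in the discussion preceding Lemma~\ref{upperbound}, every color class of a proper coloring is an independent set and hence lies inside a single $W_i$; thus an equitable coloring with color sizes in $\{q,q+1\}$ is the same thing as a choice of a $q$-partition of each $n_i$, the total number of parts being the number of colors. The first real step is to read off $h$: applying Corollary~\ref{noq-partition} with $q=k-1$ and $n=n_i$ shows that $n_i/(k-1)\ge\lceil n_i/k\rceil$ holds exactly when $n_i$ admits a $(k-1)$-partition. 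Hence $h$ is the largest $k$ such that every $n_i$ has a $(k-1)$-partition, equivalently $h-1$ is the largest $q$ for which all of $n_1,\dots,n_t$ simultaneously admit $q$-partitions. Put $M=\sum_{i=1}^t\lceil n_i/h\rceil$.

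For the upper bound I would exhibit an equitable $M$-coloring. Taking $q=h-1$, each $n_i$ has a $q$-partition by the previous step; using the minimal one, which by Lemma~\ref{minandmax1} has $\lceil n_i/(q+1)\rceil=\lceil n_i/h\rceil$ parts, and giving each part its own color produces a proper coloring with exactly $M$ colors in which every class has size $h-1$ or $h$. Any two classes then differ in size by at most $1$, so the coloring is equitable and $\chi_=(K_{n_1,\dots,n_t})\le M$.

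For the lower bound, take an arbitrary equitable coloring with $m$ colors and set $Q=\lfloor N/m\rfloor$, so every class has size $Q$ or $Q+1$. Restricting the classes to a fixed $W_i$ displays a $Q$-partition of $n_i$, and since this occurs for every $i$ at once, the maximality defining $h$ gives $Q\le h-1$. If $p_i$ is the number of classes inside $W_i$, then $p_i$ is at least the number of parts of the minimal $Q$-partition, which is $\lceil n_i/(Q+1)\rceil$ by Lemma~\ref{minandmax1}. Because $\lceil n_i/x\rceil$ is nonincreasing in $x$ and $Q+1\le h$, summing gives $m=\sum_{i=1}^t p_i\ge\sum_{i=1}^t\lceil n_i/(Q+1)\rceil\ge\sum_{i=1}^t\lceil n_i/h\rceil=M$. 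Thus $\chi_=\ge M$, matching the upper bound.

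I expect the main obstacle to be the identification of $h$ with the largest simultaneously admissible part size, rather than any estimation; once Corollary~\ref{noq-partition} converts the defining inequality into the existence of a $(k-1)$-partition, both bounds are short. The point deserving care is that admissibility of a $q$-partition is not monotone in $q$, so one must not argue by gradually enlarging $q$; instead both inequalities rest solely on the single extremal value $h-1$ and on the monotonicity of $\lceil n_i/x\rceil$ in $x$, which is exactly what aligns the minimal-partition counts in the two directions.
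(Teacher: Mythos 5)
The paper gives no proof of this lemma at all---it is imported from Lih's survey \cite{lih1998} (the result itself goes back to Blum, Torrey and Hammack \cite{blum2003}), so there is no in-paper argument to compare against line by line. Your proof is correct, and it is exactly the argument the paper's Section~2 machinery is built for: color classes of a proper coloring lie inside single partite sets, so equitable colorings with class sizes in $\{q,q+1\}$ are the same as simultaneous $q$-partitions; Corollary~\ref{noq-partition} identifies $h-1$ as the largest simultaneously admissible part size; and Lemma~\ref{minandmax1} supplies the part counts for both directions (minimal $(h-1)$-partitions for the upper bound, and for the lower bound the observation that any equitable $m$-coloring has class sizes $Q=\lfloor N/m\rfloor$ and $Q+1$ with $Q+1\le h$, whence $m\ge\sum_{i}\lceil n_i/(Q+1)\rceil\ge\sum_{i}\lceil n_i/h\rceil$). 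The only blemish is the degenerate case of colorings with more colors than vertices, where some classes are empty and your identity $m=\sum_i p_i$ should read $m\ge\sum_i p_i$; since in that case $m>N\ge\sum_i\lceil n_i/h\rceil$ holds trivially, the bound is unaffected, and your closing remark about non-monotonicity of $q$-partition admissibility correctly pinpoints why the argument must hinge on the single value $h-1$ rather than on varying $q$.
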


\begin{corollary}\label{completegraph3}
$\chi_=(K_{m_1n, \cdots, m_rn}) = \sum_{i = 1}^r \lceil m_in/h \rceil$, where $h=\textup{max}\{k\colon\,m_in/(k
- 1) \geq \lceil m_in/k \rceil$ for all $i$\}.
\end{corollary}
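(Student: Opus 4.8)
The plan is to recognize that Corollary~\ref{completegraph3} is nothing more than a direct specialization of Lemma~\ref{completegraph2}. By definition, $K_{m_1n, \cdots, m_rn}$ is the complete multipartite graph whose $r$ partite sets have sizes $m_1n, m_2n, \ldots, m_rn$. Hence it fits the hypothesis of Lemma~\ref{completegraph2} exactly, upon setting the number of parts $t = r$ and the part sizes $n_i = m_i n$ for each $i \in [r]$.

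First I would substitute these values into the conclusion of Lemma~\ref{completegraph2}. The equitable chromatic number then reads $\sum_{i=1}^{r} \lceil m_i n / h \rceil$, and the parameter $h$ becomes the largest $k$ for which $m_i n/(k-1) \geq \lceil m_i n/k \rceil$ holds for every $i \in [r]$. This is precisely the claimed formula, so no argument beyond the substitution is needed.

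The only point worth checking is that $h$ is well defined, i.e.\ that the defining set of admissible $k$ is nonempty and bounded above; this is inherited from Lemma~\ref{completegraph2} and is in any case immediate, since $k = 2$ always qualifies (as $m_i n/1 = m_i n \geq \lceil m_i n/2 \rceil$ for all $i$), while once $k-1$ exceeds the smallest part size $\min_i m_i n$ the inequality $m_i n/(k-1) \geq \lceil m_i n/k \rceil = 1$ fails at the minimizing index. I do not expect any genuine obstacle here: the corollary is purely a matching of the part sizes of $K_{m_1n, \cdots, m_rn}$ with those in the general complete multipartite formula, and its role is simply to package Lemma~\ref{completegraph2} in the form that will be convenient for the Kronecker-product arguments to follow.
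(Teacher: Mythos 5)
Your proposal is correct and matches the paper's treatment: the paper states this corollary without proof precisely because it is the immediate specialization of Lemma~\ref{completegraph2} to $t=r$ and $n_i = m_i n$, which is exactly your argument. Your additional check that $h$ is well defined (the set of admissible $k$ contains $k=2$ and is bounded above) is a harmless bonus not present in the paper.
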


\section{The results}
\begin{lemma}\label{compare}
If $K_{m_1, \cdots, m_r}\times K_n$ is equitably $k$-colorable for some $k<\lceil{mn}/(m+1)\rceil$, then
$K_{m_1n, \cdots, m_rn}$ is also equitably $k$-colorable, where $m=\sum_{i=1}^{r}m_i$.
\end{lemma}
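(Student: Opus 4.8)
The plan is to exploit the structure of independent sets in $K_{m_1,\cdots,m_r}\times K_n$ and to show that the hypothesis $k<\lceil mn/(m+1)\rceil$ forces every color class of an equitable $k$-coloring to lie inside a single block, after which the coloring transplants verbatim to $K_{m_1n,\cdots,m_rn}$. I would first fix notation: write the partite sets of $K_{m_1,\cdots,m_r}$ as $P_1,\cdots,P_r$ with $|P_i|=m_i$, and for $i\in[r]$ set $B_i=\{(x,y)\colon\,x\in P_i,\ y\in V(K_n)\}$, so that $|B_i|=m_in$ and the $B_i$ partition the vertex set. Since $(x,y)$ and $(x',y')$ are adjacent exactly when $x,x'$ lie in distinct partite sets and $y\neq y'$, each $B_i$ is independent, and the $B_i$ are precisely the partite sets of $K_{m_1n,\cdots,m_rn}$.

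The crux is a classification of the independent sets of the product. I would prove that if an independent set $S$ meets two different blocks, then all of $S$ lies in a single layer $L_c=\{(x,c)\colon\,x\in V(K_{m_1,\cdots,m_r})\}$: given $(x,y)\in S\cap B_i$ and $(x',y')\in S\cap B_j$ with $i\neq j$, non-adjacency forces $y=y'$, and repeating this for every vertex of $S$ pins them all to one common second coordinate $c$. As $|L_c|=m$, any independent set meeting two blocks has size at most $m$; equivalently, any independent set of size greater than $m$ is confined to a single block $B_i$.

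I would then translate the hypothesis into a size bound. In an equitable $k$-coloring each color class has size $\lfloor mn/k\rfloor$ or $\lceil mn/k\rceil$; put $q=\lfloor mn/k\rfloor$. A short computation shows that $k<\lceil mn/(m+1)\rceil$ yields $k\leq mn/(m+1)$, whence $mn/k\geq m+1$ and so $q\geq m+1$. Hence every color class has size at least $m+1>m$, and by the classification above each color class is contained in a single block.

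It remains to transplant the coloring. Because the blocks $B_i$ are exactly the partite sets of $K_{m_1n,\cdots,m_rn}$ and every subset of a partite set is independent in a complete multipartite graph, the same partition of the vertex set into color classes is already a proper coloring of $K_{m_1n,\cdots,m_rn}$; the class sizes are unchanged, so it is equitable and uses the same number $k$ of colors. I expect the classification step---pinning a cross-block independent set to a single layer of size $m$---to be the main obstacle, since it is exactly the point where the tensor structure (rather than the weaker complete-multipartite structure) is used; once $q\geq m+1$ is established, the rest is bookkeeping.
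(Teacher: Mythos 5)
Your proof is correct and takes essentially the same route as the paper's: both arguments show that $k<\lceil mn/(m+1)\rceil$ forces every color class to have size at least $m+1$, and that any independent set meeting two blocks must have a constant second coordinate and hence size at most $m$, so each class lies in a single block and the partition transfers verbatim to $K_{m_1n,\cdots,m_rn}$. The only difference is expository: you spell out the floor/ceiling computation giving $\lfloor mn/k\rfloor\ge m+1$, which the paper states without detail.
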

\begin{proof}
Let $V(K_{m_1, \cdots, m_r}\times K_n)=\{(x_i^j,y^s)\colon\,i\in[r],j\in[m_i],s\in[n]\}$ and $f$ be an equitable
$k$-coloring of $K_{m_1, \cdots, m_r}\times K_n$ for some $k<\lceil{mn}/(m+1)\rceil$. Then each color class $C$
has size at least $m+1$. We claim that $C$ is a subset of $\{(x_i^j,y^s)\colon\,j\in[m_i],s\in[n]\}$ for some
$i\in[r]$, which implies that $K_{m_1n, \cdots, m_rn}$ is also equitably $k$-colorable.

To show the claim, assume to the contrary that there are $i$ and $i^\prime$, $i\neq i^\prime$, such that
$(x_i^j,y^s),(x_{i^\prime}^{j^\prime},y^{s^\prime})\in C$ for some $j\in[m_i]$, $j^\prime\in[m_{i^\prime}]$,
$s,s^\prime\in[n]$. If $s\neq s^\prime$ then $(x_i^j,y^s)$ is adjacent to
$(x_{i^\prime}^{j^\prime},y^{s^\prime})$, contrary to the fact that $C$ is an independent set. Therefore all
vertices in $C$ take the same value $y^s$ for the second coordinate and hence $|C|\le \sum_{i=1}^{r}m_i$, a
contradiction. The claim follows.
\end{proof}

\begin{lemma}\label{partition}
If $f$ is an equitable $\lfloor{mn}/{(m+1)}\rfloor$-coloring of $K_{m_1n, \cdots, m_rn}$, then each color class
is of size $m+1$ or $m+2$, where $m=\sum_{i=1}^{r}m_i\le n$.
\end{lemma}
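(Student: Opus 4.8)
The plan is to reduce the statement to a purely arithmetic fact about $N := |V(K_{m_1n,\cdots,m_rn})| = \sum_{i=1}^r m_in = mn$ and $k := \lfloor mn/(m+1)\rfloor$. Since $f$ is equitable, every color class has size $\lfloor N/k\rfloor$ or $\lceil N/k\rceil$, and these two quantities differ by at most $1$. Hence it suffices to prove the two inequalities $\lfloor mn/k\rfloor \ge m+1$ and $\lceil mn/k\rceil \le m+2$; together they force every class size to lie in $\{m+1,m+2\}$. No structural property of the coloring beyond equitability is needed, so the whole argument becomes number-theoretic.

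First I would dispose of the lower bound. Because $k = \lfloor mn/(m+1)\rfloor \le mn/(m+1)$, we get $mn/k \ge m+1$, and since $m+1$ is an integer this yields $\lfloor mn/k\rfloor \ge m+1$. This step uses nothing about the relative sizes of $m$ and $n$.

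For the upper bound I would write the Euclidean division $mn = (m+1)k + s$ with $0 \le s \le m$. Then $\lceil mn/k\rceil \le m+2$ is equivalent to $mn \le (m+2)k$, which in turn is equivalent to the clean inequality $s \le k$ (this is exactly the $(m+1)$-partition condition of Lemma~\ref{basic}, with $n$ replaced by $mn$ and $q$ by $m+1$). The task thus reduces to showing $s \le k$, and here the hypothesis $m \le n$ enters. From $n \ge m$ one has $mn \ge m^2$, hence $k \ge \lfloor m^2/(m+1)\rfloor = m-1$. If $k \ge m$, then $s \le m \le k$ and we are done. The only remaining possibility is $k = m-1$; I would check that $k = \lfloor mn/(m+1)\rfloor = m-1$ together with $n \ge m$ forces $n = m$, in which case a direct computation gives $s = mn - (m+1)(m-1) = 1 \le m-1 = k$ (for $m \ge 2$). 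In every case $s \le k$, completing the upper bound.

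The main obstacle is the tight boundary case, namely controlling what happens when $k$ attains its minimum value $m-1$; this is where the inequality $s \le k$ is closest to failing and where the assumption $m \le n$ is genuinely used. (The degenerate case $m=1$, corresponding to a single partite set, can be verified separately and directly.) Once $s \le k$ is established, combining it with the lower bound gives $m+1 \le \lfloor mn/k\rfloor \le \lceil mn/k\rceil \le m+2$, so each color class has size $m+1$ or $m+2$, as claimed.
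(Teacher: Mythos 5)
Your proof is correct, and its skeleton matches the paper's: both reduce the lemma to the purely arithmetic claim $m+1\le mn/k\le m+2$ for $k=\lfloor mn/(m+1)\rfloor$, with the lower bound immediate from $k\le mn/(m+1)$. Where you diverge is in the upper bound. The paper splits into three cases on $n$ ($n=m$, $n=m+1$, $n\ge m+2$): the first two by computing $k$ explicitly, the third by writing $mn=p(m+1)+s$, bounding $k=p\ge(mn-m)/(m+1)$, and estimating $mn/k\le n(m+1)/(n-1)=m+1+(m+1)/(n-1)\le m+2$. You instead use the division $mn=(m+1)k+s$, $0\le s\le m$, to recast the upper bound as the single inequality $s\le k$ (exactly the $(m+1)$-partition existence condition of Lemma~\ref{basic} applied to $mn$), which holds trivially once $k\ge m$; since $m\le n$ gives $k\ge m-1$, only the boundary case $k=m-1$, which forces $n=m$, needs a direct check. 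Your reformulation buys a more unified argument: it pinpoints the one tight case, makes visible exactly where the hypothesis $m\le n$ is used, and ties the bound to the partition machinery the paper relies on elsewhere, whereas the paper's split on $n$ keeps each step a self-contained elementary computation. A minor point in your favor: both arguments degenerate at $m=n=1$ (there $k=0$, and the paper's Case 1 bound $m^2/(m-1)\le m+2$ also silently requires $m\ge 2$), and you at least flag $m=1$ for separate verification.
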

\begin{proof}
Let $k=\lfloor{mn}/{(m+1)}\rfloor$. Since each color class is of size $\lfloor{mn}/{k}\rfloor$ or
$\lceil{mn}/{k}\rceil$, it suffices to show that $m+1\le{mn}/{k}\le m+2$.

It is clear that $mn/k\ge m+1$. To show ${mn}/{k}\le m+2$, we consider the following three cases.

{\em Case 1.} $n=m$. We have
$$\frac{mn}{k}=\frac{m^2}{\lfloor\frac{m^2}{m+1}\rfloor}=\frac{m^2}{m-1}\le m+2.$$

{\em Case 2.} $n=m+1$. We have $k=m$ and hence $mn/k=n\le m+2$.

{\em Case 3.} $n\ge m+2$. Write $mn=p(m+1)+s$ with $0\le s<m+1$. We have
$$k=p=\frac{mn-s}{m+1}\ge \frac{mn-m}{m+1}$$ and hence
\begin{eqnarray*}
 \frac{mn}{k}&\le &\frac{mn}{\frac{mn-m}{m+1}}\\
       &=&\frac{n(m+1)}{n-1}\\
       &=&m+1+\frac{m+1}{n-1}\\
       &\le&m+2.
\end{eqnarray*}
\end{proof}
\begin{theorem}\label{theorem1}
$\chi_=(K_{m_1, m_2, \cdots, m_r} \times K_n) = \sum_{i = 1}^r \lceil m_in/h \rceil$ for $\sum_{i = 1}^r m_i\le
n$, where $h=\textup{max}\{k\colon\,m_in/(k - 1) \geq \lceil m_in/k \rceil$ for all $i\}$.
\end{theorem}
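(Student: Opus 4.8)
The plan is to observe that, by Corollary~\ref{completegraph3}, the quantity $\sum_{i=1}^r\lceil m_i n/h\rceil$ is precisely $\chi_=(K_{m_1 n,\cdots,m_r n})$. Writing $G=K_{m_1,\cdots,m_r}\times K_n$, $H=K_{m_1 n,\cdots,m_r n}$ and $\chi^*=\chi_=(H)$, the theorem is therefore the assertion $\chi_=(G)=\chi^*$, and I would prove the two inequalities $\chi_=(G)\le\chi^*$ and $\chi_=(G)\ge\chi^*$ separately.

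For $\chi_=(G)\le\chi^*$ the key structural remark is that, for each $i$, the layer $L_i=\{(x_i^j,y^s)\colon j\in[m_i],\,s\in[n]\}$ is an independent set of $G$ of size $m_i n$: any two of its vertices share the $i$-th part in the first coordinate and so are non-adjacent. I would then take an optimal equitable coloring of $H$ realizing the value in Corollary~\ref{completegraph3}; concretely, give each part $N_i$ its minimal $(h-1)$-partition, producing $\lceil m_i n/h\rceil$ classes of sizes $h-1$ and $h$ (Lemma~\ref{minandmax1}), and transport it to $G$ by splitting each independent layer $L_i$ into classes of exactly the same sizes. Each class is contained in some $L_i$ and hence is independent, so the result is a proper coloring; and since every class has size $h-1$ or $h$, it is equitable. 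This produces an equitable $\chi^*$-coloring of $G$.

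For $\chi_=(G)\ge\chi^*$ I would argue by contradiction using Lemma~\ref{compare}. Suppose $G$ is equitably $k$-colorable with $k<\chi^*$. If we also knew $k<\lceil mn/(m+1)\rceil$, then Lemma~\ref{compare} would yield an equitable $k$-coloring of $H$, contradicting $k<\chi^*=\chi_=(H)$. Thus the whole argument hinges on the inequality $\chi^*\le\lceil mn/(m+1)\rceil$, and this is the step I expect to be the main obstacle. To establish it I would first note that each $m_i n$ is divisible by $n$ and so admits an $n$-partition (take $m_i$ parts of size $n$); by Corollary~\ref{noq-partition} this forces $\lceil m_i n/(n+1)\rceil\le m_i n/n$ for every $i$, so $k=n+1$ satisfies the condition defining $h$ and hence $h-1\ge n$. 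The defining condition of $h$ taken at $k=h$ gives $\lceil m_i n/h\rceil\le m_i n/(h-1)$ for each $i$, whence $\chi^*=\sum_{i}\lceil m_i n/h\rceil\le mn/(h-1)\le m$, the last inequality using $h-1\ge n$. Finally the hypothesis $m=\sum_i m_i\le n$ yields $\lceil mn/(m+1)\rceil\ge m\ge\chi^*$, exactly as required.

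Putting the two bounds together gives $\chi_=(G)=\chi^*=\sum_{i=1}^r\lceil m_i n/h\rceil$. The only delicate part is the chain $h-1\ge n\Rightarrow\chi^*\le m\Rightarrow\chi^*\le\lceil mn/(m+1)\rceil$, which draws on both the divisibility of each $m_i n$ by $n$ and the hypothesis $m\le n$; by contrast the upper bound is a routine transport of the complete-multipartite coloring along the independent layers $L_i$, and needs no new idea.
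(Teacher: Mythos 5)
Your proof is correct and takes essentially the same route as the paper: the upper bound is the paper's spanning-subgraph transfer from $K_{m_1n,\cdots,m_rn}$ (Corollary~\ref{completegraph3}) made explicit via the independent layers, and the lower bound is exactly the paper's application of Lemma~\ref{compare}, justified by the same chain $h\ge n+1\Rightarrow\sum_{i}\lceil m_in/h\rceil\le m\le\lceil mn/(m+1)\rceil$. If anything, your write-up is slightly more careful than the paper's, which asserts $h\ge n$ ``by the definition of $h$'' without your $n$-partition argument and explicitly rules out only $k=\sum_{i}\lceil m_in/h\rceil-1$ rather than all smaller $k$.
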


\begin{proof}

 Since $K_{m_1, \cdots, m_r} \times K_n$ is a span subgraph of $K_{m_1n, \cdots, m_rn}$,
 Corollary~\ref{completegraph3} implies
  $\chi_=(K_{m_1, \cdots, m_r} \times K_n) \leq\sum_{i=1}^r \lceil m_in/h \rceil$.

Let $m=\sum_{i=1}^{r}m_i$ and $L=\sum_{i=1}^r \lceil m_in/h \rceil-1$. By the definition of $h$, we have $h\ge
n$ and hence $L<m\le\lceil mn/(m+1)\rceil$, where the last inequality follows from the fact $m\le n$ . By
Corollary ~\ref{completegraph3}, $K_{m_1n, \cdots, m_rn}$ is not equitably $L$-colorable. Lemma~\ref{compare}
implies that $K_{m_1, \cdots, m_r} \times K_n$ is also not equitably $L$-colorable.
\end{proof}

\begin{theorem}\label{theorem2}

\begin{equation*}
\chi_=^*(K_{m_1, \cdots, m_r} \times K_n)=\begin{cases}\lceil {mn}/{(m+1)} \rceil
                                  & if~ \sum_{i = 1}^r \lfloor {m_in}/{(m + 1)}\rfloor < \lfloor {mn}/{(m + 1)}\rfloor,\\
                                  &~or~there~is~some~i~such~that~\\
                                  & m_in/(m + 1) < \lceil m_in/(m + 2) \rceil,\\
\sum_{i = 1}^r \lceil m_in/h\rceil&otherwise,
\end{cases}
\end{equation*}
for $m = \sum_{i = 1}^r m_i\le n$, where $h=$\textup{min} \{$t\ge m+2\colon\,$ there is some $i$ such that
$m_in/t < \lceil m_in/(t + 1) \rceil$ or there are $m_i$ and $m_j$, $i \neq j$, such that $t$ divides neither
$m_in$ nor $m_jn$\}.
\end{theorem}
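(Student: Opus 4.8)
The plan is to reduce everything to the equitable colorability of the complete multipartite graph $H := K_{m_1n,\cdots,m_rn}$, for which the partition lemmas of Section~2 give a transparent criterion. Write $G := K_{m_1,\cdots,m_r}\times K_n$ and $m=\sum_{i=1}^r m_i$. The starting observation is that $G$ is a spanning subgraph of $H$ (as already used in the proof of Theorem~\ref{theorem1}), so every equitable $k$-coloring of $H$ restricts to one of $G$; conversely, Lemma~\ref{compare} shows that for $k<\lceil mn/(m+1)\rceil$ an equitable $k$-coloring of $G$ induces one of $H$. Hence for every $k<\lceil mn/(m+1)\rceil$ the graphs $G$ and $H$ are equitably $k$-colorable simultaneously, while for $k\ge\lceil mn/(m+1)\rceil$ Lemma~\ref{upperbound}, applied to the supergraph $K_m\times K_n$ of which $G$ is a spanning subgraph, makes $G$ equitably $k$-colorable. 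Therefore $\chi_=^*(G)=1+\max\{\,k<\lceil mn/(m+1)\rceil : H \text{ is not equitably } k\text{-colorable}\,\}$, and the task is to locate the largest gap in the colorability of $H$ below $\lceil mn/(m+1)\rceil$.

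Next I would make the criterion for $H$ explicit. Since each color class lies inside one part $N_i$, an equitable $k$-coloring of $H$ amounts to choosing, for a common $q$, a $q$-partition of each $m_in$ whose total number of addends equals $k$ (the class sizes are then forced to be $\lfloor mn/k\rfloor,\lceil mn/k\rceil$). By Lemma~\ref{minandmax1} together with a direct check, the number of addends of a $q$-partition of $m_in$ ranges over every integer in $[\lceil m_in/(q+1)\rceil,\lfloor m_in/q\rfloor]$, so $H$ is equitably $k$-colorable if and only if
\[
f(q):=\sum_{i=1}^r\left\lceil\frac{m_in}{q+1}\right\rceil\le k\le\sum_{i=1}^r\left\lfloor\frac{m_in}{q}\right\rfloor=:g(q)
\]
for some $q$. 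These bands fit together by the identity $f(q)-g(q+1)=\#\{i:(q+1)\nmid m_in\}$: consecutive bands meet or overlap exactly when at most one part fails to be divisible by $q+1$, and a band is non-empty exactly when every $m_in$ admits a $q$-partition. Read against Corollary~\ref{noq-partition}, these are precisely the two alternatives defining $h$.

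For the first case I would show the obstruction sits at $q=m+1$. One checks that each of conditions A and B forces $(m+1)\nmid mn$ (indeed $(m+1)\mid n$ makes every $m_in/(m+1)$ an integer, defeating both), so $\lceil mn/(m+1)\rceil-1=\lfloor mn/(m+1)\rfloor$; by Lemma~\ref{partition} an equitable coloring of $H$ with this many colors must use class sizes $m+1,m+2$, i.e.\ $q=m+1$. Condition B says some $m_in$ has no $(m+1)$-partition, emptying the band; condition A says $g(m+1)<\lfloor mn/(m+1)\rfloor$, so $k=\lfloor mn/(m+1)\rfloor>g(m+1)$ lies above it. Either way $H$, hence $G$, is not equitably $(\lceil mn/(m+1)\rceil-1)$-colorable, giving $\chi_=^*(G)\ge\lceil mn/(m+1)\rceil$, and the matching upper bound is Lemma~\ref{upperbound}.

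For the remaining case neither A nor B holds, i.e.\ every $m_in$ has an $(m+1)$-partition and $g(m+1)=\lfloor mn/(m+1)\rfloor$. The definition of $h$ then ensures that for each $q$ with $m+1\le q\le h-1$ the band $[f(q),g(q)]$ is non-empty and consecutive bands meet, so their union is the single interval $[f(h-1),g(m+1)]=[\sum_{i=1}^r\lceil m_in/h\rceil,\ \lfloor mn/(m+1)\rfloor]$; with Lemma~\ref{upperbound} for $k\ge\lceil mn/(m+1)\rceil$ this proves $G$ equitably $k$-colorable for all $k\ge\sum_{i=1}^r\lceil m_in/h\rceil$. It remains to certify the gap at $k=\sum_{i=1}^r\lceil m_in/h\rceil-1=f(h-1)-1$: if $h$ arises from the two-non-divisible-parts alternative then $f(h-1)-1>g(h)$ and this value lies strictly between the bands $q=h$ and $q=h-1$; if $h$ arises from the no-$h$-partition alternative then the band $q=h$ is empty, and since the single non-divisible part contributes $\lceil m_{i_0}n/h\rceil-\lfloor m_{i_0}n/h\rfloor=1$ one gets $f(h-1)-1=\lfloor mn/h\rfloor$, the top of that vanished band. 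In both sub-cases $k=\sum_{i=1}^r\lceil m_in/h\rceil-1$ is uncovered, so $H$, hence $G$ by Lemma~\ref{compare}, is not equitably $k$-colorable, yielding $\chi_=^*(G)=\sum_{i=1}^r\lceil m_in/h\rceil$. The hard part will be exactly this last step: checking uniformly across the two alternatives in the definition of $h$ that the largest uncovered value is precisely $f(h-1)-1$, and verifying that no extra gap is introduced at the junction $q=m+1$ in the boundary situation $(m+1)\mid mn$.
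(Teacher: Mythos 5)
Your overall strategy is sound and genuinely different in organization from the paper's. Where the paper proves colorability of $K_{m_1n,\cdots,m_rn}$ on the range $[\sum_i\lceil m_in/h\rceil,\ \lceil mn/(m+1)\rceil-1]$ by induction on $k$ (its Claim 1, with Claims 1.1--1.4 and two subcases of partition surgery), you package the same partition lemmas into a ``band'' criterion: for each $q$ such that every $m_in$ admits a $q$-partition, the achievable numbers of color classes form exactly the integer interval $[f(q),g(q)]=[\sum_i\lceil m_in/(q+1)\rceil,\ \sum_i\lfloor m_in/q\rfloor]$, and the identity $f(q)-g(q+1)=\#\{i\colon (q+1)\nmid m_in\}$ shows that consecutive bands chain together exactly when at most one part is non-divisible. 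Minimality of $h$ then fuses the bands for $q=m+1,\dots,h-1$ into the single interval $[\sum_i\lceil m_in/h\rceil,\ \lfloor mn/(m+1)\rfloor]$. This is correct: the ``direct check'' you need, that the addend count of $q$-partitions of a single integer takes every intermediate value, is exactly the paper's surgery $m_in=(a_i+q+1)q+(b_i-q)(q+1)$ justified by Lemma~\ref{maximal1}; and your Case 1 and the transfer between the two graphs via Lemma~\ref{compare} and the spanning-subgraph relation coincide with the paper's argument.

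There is, however, a genuine gap at the step you yourself flag as ``the hard part'': certifying that $k=\sum_i\lceil m_in/h\rceil-1$ is uncovered when $h$ arises from the no-$h$-partition alternative. Your argument there only rules out bands with $q\le h$: monotonicity of $f$ handles $q\le h-1$, and the band at $q=h$ is vacuous. But being ``the top of the vanished band'' does not rule out coverage by bands with $q\ge h+1$; for that you need $g(h+1)<k$, and this is not automatic from what you have established --- indeed, for the exceptional part one can have $\lfloor m_{i_0}n/(h+1)\rfloor=\lfloor m_{i_0}n/h\rfloor$ (e.g.\ $m_{i_0}n=5$, $h=3$), so the top of the band need not drop part-by-part in passing from $q=h$ to $q=h+1$. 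The missing inequality is precisely the paper's Subcase $2.2^\prime$: for every part, $m_in/(h+1)<m_in/h$ gives $\lfloor m_in/(h+1)\rfloor\le\lceil m_in/h\rceil-1$, whence
\begin{equation*}
g(h+1)=\sum_{i=1}^r\Bigl\lfloor\frac{m_in}{h+1}\Bigr\rfloor\le\sum_{i=1}^r\Bigl\lceil\frac{m_in}{h}\Bigr\rceil-r\le k-1,
\end{equation*}
using $r\ge 2$; together with $g(q)\le g(h+1)$ for $q\ge h+1$ this closes your sub-case (b), while your sub-case (a) is already complete. You should also state explicitly that $k<\lceil mn/(m+1)\rceil$, which is required before Lemma~\ref{compare} can transfer non-colorability from $K_{m_1n,\cdots,m_rn}$ to the product; this does follow from your chaining, since $k<f(h-1)\le g(m+1)=\lfloor mn/(m+1)\rfloor$, but it corresponds to the paper's separate Claim 4 and must be said.
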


\begin{proof}
Since $K_{m_1, \cdots, m_r}\times K_n$ is a span subgraph of $K_{m}\times K_n$, Lemma~\ref{upperbound} implies
$\chi_=^*(K_{m_1, \cdots, m_r} \times K_n)\leq \lceil {mn}/{(m+1)} \rceil$.

\textit{Case} 1. $\sum_{i = 1}^r \lfloor {m_in}/{(m + 1)}\rfloor < \lfloor {mn}/{(m + 1)}\rfloor$, or there is
some $i$ such that $m_in/(m + 1) < \lceil m_in/(m + 2) \rceil$.

Clearly, $m+1\nmid n$ by the condition of Case 1. Since $\chi_=^*(K_{m_1, \cdots, m_r} \times K_n)\leq \lceil
{mn}/{(m+1)} \rceil$, it suffices to show that $K_{m_1, \cdots, m_r} \times K_n$ is not equitably
$\lfloor{mn}/{(m+1)} \rfloor$-colorable. Suppose to the contrary that $K_{m_1, \cdots, m_r} \times K_n$ is
equitably $\lfloor{mn}/{(m+1)} \rfloor$-colorable. By Lemma~\ref{compare}, $K_{m_1n, \cdots, m_rn}$ is also
equitably $\lfloor{mn}/{(m+1)} \rfloor$-colorable. Let $f$ be any equitable $\lfloor{mn}/{(m+1)}
\rfloor$-coloring of $K_{m_1n, \cdots, m_rn}$. By Lemma~\ref{partition}, each color class of $f$ has size $m+1$
or $m+2$, i.e., $f$ corresponds to simultaneous $(m+1)$-partitions of each of the numbers $m_in$. By
Corollary~\ref{noq-partition}, $m_in/(m+1)\ge\lceil m_in/(m + 2)\rceil$ for $i\in[r]$, which implies $\sum_{i =
1}^r \lfloor {m_in}/{(m + 1)}\rfloor < \lfloor {mn}/{(m + 1)}\rfloor$ by the condition of Case 1. By Lemma
\ref{minandmax1}, any $(m+1)$-partition of $m_in$ contains at most $\lfloor m_in/(m+1)\rfloor$ attends. Since
$f$ corresponds to simultaneous $(m+1)$-partitions of each of the numbers $m_in$ with $\lfloor{mn}/{(m+1)}
\rfloor$ attends all together, we have $\lfloor{mn}/{(m+1)} \rfloor\le\sum_{i = 1}^r \lfloor {m_in}/{(m +
1)}\rfloor$, a contradiction. This proves the theorem for this case.

\textit{Case} 2. $\sum_{i = 1}^r \lfloor {m_in}/{(m +1)}\rfloor \ge \lfloor {mn}/{(m + 1)}\rfloor$ and $m_in/(m+
1) \ge \lceil m_in/(m + 2) \rceil$ for all $i$.

\noindent\textbf{Claim 1.} $K_{m_1n, \cdots, m_rn}$ is equitably $k$-colorable for $\lceil
{mn}/{(m+1)}\rceil-1\ge k\ge\sum_{i = 1}^r \lceil {m_in}/{h}\rceil$.

First we prove that Claim 1 holds for $k=\sum_{i = 1}^r \lceil {m_in}/{h}\rceil$. Set $h^\prime=h-1$. If $h>m+2$
then the definition of $h$ implies that $m_in/h^\prime\ge\lceil m_in/(h^\prime+1)\rceil$ for all $i$, otherwise,
$h^\prime=h-1=m+1$ and the same conclusion also holds by the condition of Case 2.

By Corollary~\ref{noq-partition}, each $m_in$ has an $h^\prime$-partition. By Lemma~\ref{minandmax1}, a minimal
partition of each $m_in$ has $\lceil m_in/h\rceil$ attends, which implies that $K_{m_1n, \cdots, m_rn}$ is
equitably $\sum_{i = 1}^r \lceil {m_in}/{h}\rceil$-colorable. This proves that Claim 1 holds for $k=\sum_{i =
1}^r \lceil {m_in}/{h}\rceil$.

Now assume that Claim 1 is true for some $k$ satisfying
 \begin{equation}\label{assk}
 \lceil\frac{mn}{m+1}\rceil-2\ge k\ge\sum_{i = 1}^r \lceil \frac{m_in}{h}\rceil,
  \end{equation}
and we prove that it is true for $k+1$. Let $m_in=a_iq+b_i(q+1)$ for each $i$ such that
$\sum_{i=1}^{r}(a_i+b_i)=k$. Lemma~\ref{minandmax1} implies
 \begin{equation}\label{resk}
\sum_{i=1}^{r}\lfloor\frac{m_in}{q}\rfloor\ge k\ge \sum_{i=1}^{r}\lceil\frac{m_in}{q+1}\rceil.
  \end{equation}

\noindent\textbf{Claim 1.1.} $m<q<h.$

If $q\le m$ then by (\ref{resk}) and the condition of Case 2 we have
\begin{displaymath}
      k\ge\sum_{i=1}^{r}\lceil\frac{m_in}{q+1}\rceil\ge\sum_{i=1}^{r}\lceil\frac{m_in}{m+1}\rceil\ge\lceil\frac{mn}{m+1}\rceil,
\end{displaymath}
a contradiction to (\ref{assk}).

If $q\ge h$ then by (\ref{resk}) and (\ref{assk}) we have
\begin{displaymath}
\sum_{i=1}^{r}\lfloor\frac{m_in}{h}\rfloor\ge\sum_{i=1}^{r}\lfloor\frac{m_in}{q}\rfloor\ge k\ge\sum_{i = 1}^r
\lceil \frac{m_in}{h}\rceil,
\end{displaymath}
which implies $h\mid m_in$ for all $i$, contrary to the definition of $h$. This proves Claim 1.1.

If there is some $m_i$ whose $q$-partition $m_in=a_iq+b_i(q+1)$ is not maximal, then Lemma~\ref{maximal1}
implies $b_i\ge q$. By using a new partition $m_in=(a_i+q+1)q+(b_i-q)(q+1)$ one finds that Claim 1 is true for
$k+1$.

Now we assume that each $q$-partition $m_in=a_iq+b_i(q+1)$ is maximal. Then Lemma~\ref{minandmax1} implies
\begin{equation}\label{nresk}
k=\sum_{i=1}^{r}(a_i+b_i)=\sum_{i=1}^{r}\lfloor\frac{m_in}{q}\rfloor.
\end{equation}

\noindent\textbf{Claim 1.2.} $m+2\le q<h.$

By Claim 1.1, it suffices to show $q\neq m+1$. Suppose to the contrary that $q=m+1$. Then by (\ref{nresk}) and
the condition of Case 2 we have
$$k=\sum_{i=1}^{r}\lfloor\frac{m_in}{m+1}\rfloor\ge\lfloor\frac{mn}{m+1}\rfloor,$$
a contradiction to (\ref{assk}).

\noindent\textbf{Claim 1.3.} Each $m_in$ has a $(q-1)$-partition.

By Corollary~\ref{noq-partition}, it suffices to show $m_in/(q-1) \ge \lceil m_in/q \rceil$ for all $i$. By
Claim 1.2, either $q-1=m+1$ or $q-1\ge m+2$. It holds when $q-1=m+1$ by the condition of Case 2. If $q-1\ge m+2$
then the definition of $h$ implies the same conclusion.

\noindent\textbf{Claim 1.4.} There is some $i$ such that $\lceil{m_in}/{q}\rceil<\lfloor{m_in}/{(q-1)}\rfloor.$

By Claim 1.2, the definition of $h$ implies that $q\mid m_in$ for all $i$ with at most one exception. Therefore,
$\sum_{i=1}^{r}\lfloor m_in/q\rfloor\ge\sum_{i=1}^{r}\lceil m_in/q\rceil-1.$

Suppose to the contrary that $\lceil{m_in}/{q}\rceil\ge\lfloor{m_in}/{(q-1)}\rfloor$ for all $i$. Note $q-1\ge
m+1$. Combining the three inequalities and the condition of Case 2, from (\ref{nresk}) we have
\begin{eqnarray*}
       k&=&\sum_{i=1}^{r}\lfloor\frac{m_in}{q}\rfloor\\
        &\ge&\sum_{i=1}^{r}\lceil\frac{ m_in}{q}\rceil-1\\
        &\ge&\sum_{i=1}^{r}\lfloor\frac{m_in}{q-1}\rfloor-1\\
        &\ge&\sum_{i=1}^{r}\lfloor\frac{m_in}{m+1}\rfloor-1\\
        &\ge&\lfloor\frac{mn}{m+1}\rfloor-1,
\end{eqnarray*}
a contradiction to (\ref{assk}). Claim 1.4 follows.

Now we can prove that Claim 1 holds for $k+1$ by considering the following two cases.

\textit{Subcase} 2.1. $q\mid m_in$ for all $i$. By Lemma~\ref{minandmax2}, each maximal $q$-partition
$m_in=a_iq+b_i(q+1)$ is a minimal $(q-1)$-partition(i.e., $b_i=0$). By Claim 1.4, there is some $i$ such that
$\lceil{m_in}/{q}\rceil<\lfloor{m_in}/{(q-1)}\rfloor$. Therefore the $(q-1)$-partition of $m_in$ is not maximal
by Lemma~\ref{minandmax1}. Hence, Lemma~\ref{maximal1} implies $a_i\ge q-1$. By using a new partition
$m_in=q(q-1)+(a_i-q+1)q$, one finds that Claim 1 is true for $k+1$.

\textit{Subcase} 2.2. $q\nmid m_in$ for some $i$ and $q\mid m_jn$ for $j\neq i$. As in Subcase 2.1, each maximal
$q$-partition $m_jn=a_jq+b_j(q+1)$ for $j\neq i$ is a minimal $(q-1)$-partition(i.e., $b_j=0$). By Claim 1.3,
$m_in$ has a $(q-1)$-partition. Let $m_in=a^\prime(q-1)+b^\prime q$ be the minimum $(q-1)$-partition of $m_in$.
Lemma~\ref{minandmax2} implies $a^\prime+b^\prime=a_i+b_i+1$. As in Subcase 2.1, one finds that Claim 1 is true
for $k+1$ by using the new partition of $m_in$.

\noindent\textbf{Claim 2.} $K_{m_1, \cdots, m_r} \times K_n$ is equitably $k$-colorable for $k\ge\sum_{i = 1}^r
\lceil {m_in}/{h}\rceil$.

Since $K_{m_1, \cdots, m_r} \times K_n$ is a span subgraph of $K_m\times K_n$, Lemma~\ref{upperbound} implies
that Claim 2 holds for $k\ge \lceil {mn}/{(m+1)}\rceil$. Since $K_{m_1, \cdots, m_r} \times K_n$ is a span
subgraph of $K_{m_1n, \cdots, m_rn}$, Claim 2 holds for $\lceil {mn}/{(m+1)}\rceil-1\ge k\ge\sum_{i = 1}^r
\lceil {m_in}/{h}\rceil$ by Claim 1. This proves Claim 2.

\noindent\textbf{Claim 3.} $K_{m_1n, \cdots, m_rn}$ is not equitably $k$-colorable for $k=\sum_{i = 1}^r \lceil
{m_in}/{h}\rceil-1$.

Suppose to the contrary that $K_{m_1n, \cdots, m_rn}$ is equitably $(\sum_{i = 1}^r \lceil m_in/h \rceil -
1)$-colorable. Then, each $m_in$ has a $q$-partition $m_in = a_iq + b_i(q + 1)$ such that $k = \sum_{i = 1}^r
(a_i + b_i) = \sum_{i = 1}^r \lceil m_in/h \rceil - 1$. Lemma~\ref{minandmax1} implies
\begin{equation}\label{A}
\sum_{i=1}^{r}\lceil\frac{m_in}{q+1}\rceil\leq\sum_{i = 1}^r \lceil\frac{ m_in}{h} \rceil -
1\leq\sum_{i=1}^{r}\lfloor\frac{m_in}{q}\rfloor,
\end{equation}
and hence $q\ge h$ by the first inequality.

Now we show that either of the following two cases will yield a contradiction.

\textit{Subcase} $2.1^\prime.$ There are $m_i$ and $m_j$, $i \neq j$, such that $h$ divides neither $m_in$ nor
$m_jn$. Since $q\ge h$ , we have
$$\sum_{i=1}^{r}\lfloor\frac{m_in}{q}\rfloor\leq\sum_{i = 1}^r \lfloor\frac{
m_in}{h} \rfloor\leq\sum_{i=1}^{r}\lceil\frac{m_in}{h}\rceil-2,$$ a contradiction to (\ref{A}).

\textit{Subcase} $2.2^\prime.$ There is some $i$ such that $m_in/h < \lceil m_in/(h+ 1) \rceil$. By
Corollary~\ref{noq-partition}, $m_in$ has no $h$-partition, yielding $q\neq h$ and hence $q\ge h+1$. We have
\begin{displaymath}
       \sum_{i=1}^{r}\lfloor\frac{m_in}{q}\rfloor\le\sum_{i=1}^{r}\lfloor\frac{m_in}{h+1}\rfloor
                                                  \le\sum_{j=1}^{r}\Bigl(\lceil\frac{m_in}{h}\rceil-1\Bigr)
                                                 \le\sum_{i = 1}^r \lceil\frac{m_in}{h} \rceil - 2,
\end{displaymath}
a contradiction to (\ref{A}).

\noindent\textbf{Claim 4.} $K_{m_1, \cdots, m_r} \times K_n$ is not equitably $k$-colorable for $k=\sum_{i =
1}^r \lceil {m_in}/{h}\rceil-1$.

By Lemma~\ref{compare} and Claim 3, it suffices to show $\sum_{i = 1}^r \lceil {m_in}/{h}\rceil-1<\lceil
mn/(m+1)\rceil$. It is clear that $h\ge m+2$ by the definition of $h$. Recall $m_in/(m + 1) \ge \lceil m_in/(m +
2) \rceil$ for all $i$. We have
\begin{eqnarray*}
       \sum_{i = 1}^r \lceil \frac{m_in}{h}\rceil-1&<&\sum_{i = 1}^r \lceil \frac{m_in}{h}\rceil~~~~~~~\\
                                                 &\le&\sum_{i = 1}^r\lceil\frac{m_in}{m+2}\rceil\\
                                                 &\le&\sum_{i = 1}^r\frac{m_in}{m+1}\\
                                                 &=&\frac{mn}{m+1}\\
                                                 &\leq&\lceil\frac{mn}{m+1}\rceil,
\end{eqnarray*}
as desired. The proof of the theorem in this case is complete by Claims 2 and 4.
\end{proof}

\end{document}